\numberwithin{equation}{section}
\newtheorem{theorem}{Theorem}[section]
\newtheorem{lemma}[theorem]{Lemma}
\theoremstyle{definition}
\renewcommand{\epsilon}{\eps}
\renewcommand{\i}{{\rm i}}
\newcommand{\N}{{\mathbb N}}
\newcommand{\R}{{\mathbb R}}
\renewcommand{\S}{{\mathbb S}}
\newcommand{\eps}{\varepsilon}
\newcommand{\pnorm}[2][]{\if #1'' \left|#2\right|_p \else \left|#2\right|_{#1} \fi}
\newcommand{\C}{\mathbb{C}}
\renewcommand{\O}{{\mathcal O}}
\renewcommand{\theta}{\vartheta}
\title[A magnetic Bourgain-Brezis-Mironescu formula]{Bourgain-Brezis-Mironescu formula \\ for magnetic operators}
\author[M.\ Squassina]{Marco Squassina}
\address[Marco Squassina]{Dipartimento di Informatica \newline\indent
Universit\`a degli Studi di Verona
\newline\indent
Strada Le Grazie 15, I-37134 Verona, Italy}
\email{\href{mailto:marco.squassina@univr.it}{marco.squassina@univr.it}}
\author[B.\ Volzone]{Bruno Volzone}
\address[Bruno Volzone]{Dipartimento di Ingegneria  \newline\indent
	Universit\`a di Napoli Parthenope
	\newline\indent
	Centro Direzionale Isola C/4 80143 Napoli, Italy}
\email{\href{mailto:bruno.volzone@uniparthenope.it}{bruno.volzone@uniparthenope.it}}
\subjclass[2010]{49A50, 26A33, 82D99}
\keywords{Fractional spaces, magnetic Sobolev spaces, Bourgain-Brezis-Mironescu limit.}
\begin{document}

\begin{abstract}
We prove a Bourgain-Brezis-Mironescu type formula 
for a class of nonlocal magnetic spaces, which builds a bridge
between a fractional magnetic operator recently introduced and the 
classical theory.
\end{abstract}

\maketitle

\section{Introduction}
\noindent
Let $s\in (0,1)$ and $N>2s$.\ If $A:\R^N\to\R^N$ is a smooth function, the nonlocal operator
\begin{equation*}
(-\Delta)^s_Au(x)=c(N,s) \lim_{\eps\searrow 0}\int_{B^c_\eps(x)}\frac{u(x)-e^{\i (x-y)\cdot A\left(\frac{x+y}{2}\right)}u(y)}{|x-y|^{N+2s}}dy,
\,\,\quad x\in\R^N,
\end{equation*}
has been recently introduced in \cite{piemar}, where the ground state solutions of $(-\Delta)^s_A u+u=|u|^{p-2}u$ 
in the three dimensional setting have been obtained via concentration compactness arguments. 
If $A=0$, then the above operator is consistent with the usual notion of fractional Laplacian.  
The motivations that led to its introduction are carefully described in \cite{piemar} and rely essentially
on the L\'evy-Khintchine formula for the generator of a general L\'evy process. We point out that the 
normalization constant $c(N,s)$ satisfies
$$
\lim_{s\nearrow 1}\frac{c(N,s)}{1-s}=\frac{4N\Gamma(N/2)}{2\pi^{N/2}},
$$
where $\Gamma$ denotes the Gamma function.\ For the sake of completeness, we recall that 
different definitions of nonlocal magnetic operator are viable, see e.g.\ \cite{I10,franketal}. All these notions
aim to extend the well-know definition of the magnetic Schr\"odinger operator 
$$
-\big(\nabla-\i A(x)\big)^2u=-\Delta u+2\i A(x)\cdot \nabla u+|A(x)|^2u+\i u\, {\rm div} A(x),
$$
namely the differential of the energy functional
$$
{\mathcal E}_A(u)=\int_{\R^N}|\nabla u-\i A(x)u|^2dx,
$$
for which we refer the reader to \cite{arioliSz,AHS,reed} and the included references. In order to corroborate the justification for the
introduction of $(-\Delta)^s_A$, in this note we prove that a well-known formula due to 
Bourgain, Brezis and Mironescu (see \cite{bourg,bourg2,mazia}) for the limit of the Gagliardo semi-norm of $H^s(\Omega)$ as $s\nearrow 1$ extends to 
the magnetic setting. As a consequence, in a suitable sense, from the nonlocal to the local regime, it holds 
$$
(-\Delta)^s_Au   \leadsto \big(\nabla-\i A(x)\big)^2u,\quad\,\,\, \text{for $s\nearrow 1$.}
$$
We consider
$$
[u]_{H^1_A(\Omega)}:=\sqrt{\int_{\Omega}|\nabla u-\i A(x)u|^2dx},
$$
and define $H^1_A(\Omega)$  as the space of functions $u\in L^2(\Omega,\C)$ such that  $[u]_{H^1_A(\Omega)}<\infty$ endowed with the norm
$$
\|u\|_{H^1_A(\Omega)}:=\sqrt{\|u\|_{L^2(\Omega)}^2+[u]_{H^1_A(\Omega)}^2}.
$$
Our main results are the following

\begin{theorem}[Magnetic Bourgain-Brezis-Mironescu]
	\label{main}
	Let $\Omega\subset\R^N$ be an open bounded set with Lipschitz boundary and $A\in C^2(\bar{\Omega})$.\ Then,
	for every  $u\in H^1_{A}(\Omega)$, we have
	$$
	\lim_{s\nearrow 1}(1-s)\int_{\Omega}\int_{\Omega}\frac{|u(x)-e^{\i (x-y)\cdot A\left(\frac{x+y}{2}\right)}u(y)|^2}{|x-y|^{N+2s}}dxdy=
	K_N\int_{\Omega}|\nabla u-\i A(x)u|^2dx,
	$$
	where 
	\begin{equation}
	\label{valoreK}
	K_{N}=\frac{1}{2}\int_{\S^{N-1}}|\omega\cdot {\bf e}|^{2}d\mathcal{H}^{N-1}(\omega),
	\end{equation}
	being $\S^{N-1}$ the unit sphere and ${\bf e}$ any unit vector in $\R^{N}$. 
\end{theorem}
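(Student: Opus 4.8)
The plan is to reduce the assertion to an explicit local computation for smooth functions together with a uniform bound in $s$, and then to recover the general case by density. Before starting, I would extend $A$ to a function of class $C^{2}$ on a neighbourhood of the convex hull of $\bar\Omega$, so that $A\big(\tfrac{x+y}{2}\big)$ is defined for all $x,y\in\Omega$ even when $\Omega$ is not convex; this does not affect the statement. Since $A$ is bounded on $\bar\Omega$ one has $\|\nabla u\|_{L^{2}(\Omega)}\le[u]_{H^{1}_{A}(\Omega)}+\|A\|_{\infty}\|u\|_{L^{2}(\Omega)}$ and conversely, so $H^{1}_{A}(\Omega)=H^{1}(\Omega)$ with equivalent norms and $C^{\infty}(\bar\Omega)$ is dense in $H^{1}_{A}(\Omega)$. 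For $s\in(0,1)$ it is convenient to set
\[ \Phi_{s}(w)^{2}:=(1-s)\into\into\frac{|w(x)-e^{\i(x-y)\cdot A(\frac{x+y}{2})}w(y)|^{2}}{|x-y|^{N+2s}}\,dx\,dy,\qquad \Psi(w)^{2}:=K_{N}\into|\nabla w-\i A(x)w|^{2}\,dx . \]
Because $w\mapsto w(x)-e^{\i(x-y)\cdot A(\frac{x+y}{2})}w(y)$ and $w\mapsto\nabla w-\i Aw$ are linear, both $\Phi_{s}$ and $\Psi$ are seminorms and in particular obey the triangle inequality; the theorem is precisely the statement that $\Phi_{s}(u)^{2}\to\Psi(u)^{2}$ as $s\nearrow1$ for every $u\in H^{1}_{A}(\Omega)$.

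The first step is a uniform upper bound $\Phi_{s}(w)\le C\|w\|_{H^{1}_{A}(\Omega)}$, with $C$ independent of $s$ near $1$. I would split $w(x)-e^{\i(x-y)\cdot A(\frac{x+y}{2})}w(y)=\big(w(x)-w(y)\big)+\big(1-e^{\i(x-y)\cdot A(\frac{x+y}{2})}\big)w(y)$ and use $|a+b|^{2}\le2|a|^{2}+2|b|^{2}$. The first summand produces, up to a factor $2$, the classical rescaled Gagliardo energy, which is $\le C\|w\|_{H^{1}(\Omega)}^{2}$ uniformly in $s$ by the Bourgain--Brezis--Mironescu estimate combined with the Sobolev extension property of Lipschitz domains; for the second I would use $|1-e^{\i t}|\le|t|$, whence $|1-e^{\i(x-y)\cdot A(\frac{x+y}{2})}|\le\|A\|_{\infty}|x-y|$ and, since $\Omega$ is bounded,
\[ (1-s)\into\into\frac{|1-e^{\i(x-y)\cdot A(\frac{x+y}{2})}|^{2}|w(y)|^{2}}{|x-y|^{N+2s}}\,dx\,dy\le(1-s)\|A\|_{\infty}^{2}\into|w(y)|^{2}\Big(\into\frac{dx}{|x-y|^{N-2+2s}}\Big)dy\le C\|w\|_{L^{2}(\Omega)}^{2}, \]
because the inner integral is at most $|\S^{N-1}|(\mathrm{diam}\,\Omega)^{2-2s}/(2-2s)$, which stays bounded after multiplication by $1-s$ as $s\nearrow1$.

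The core of the proof is the limit for $v\in C^{\infty}(\bar\Omega)$. Setting $V:=\nabla v-\i Av\in C(\bar\Omega,\C^{N})$, a Taylor expansion of $v$ and of $t\mapsto e^{\i t}$ about each $x$ (using that $A\big(\tfrac{x+y}{2}\big)$ differs from $A(x)$ by $O(|x-y|)$, which is why no $\mathrm{div}\,A$ term appears at the level of the quadratic form) yields, uniformly on $\Omega\times\Omega$,
\[ v(x)-e^{\i(x-y)\cdot A(\frac{x+y}{2})}v(y)=-(y-x)\cdot V(x)+R(x,y),\qquad|R(x,y)|\le C|x-y|^{2}. \]
I would then fix $\delta\in(0,1)$ and split the integral defining $\Phi_{s}(v)^{2}$ at $|x-y|=\delta$. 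On $\{|x-y|\ge\delta\}$ the integrand is bounded and $|x-y|^{-N-2s}\le\delta^{-N-2s}$, so that piece is $O(1-s)$. On $\{|x-y|<\delta\}$, expanding the square, the terms involving $R$ contribute at most $C(1-s)|\Omega|\int_{|h|<\delta}|h|^{3-N-2s}\,dh=O(1-s)$, and there remains the main term $(1-s)\into\big(\int_{\{|h|<\delta,\,x+h\in\Omega\}}|h\cdot V(x)|^{2}|h|^{-N-2s}\,dh\big)dx$. For $x\in\Omega_{\delta}:=\{x\in\Omega:\mathrm{dist}(x,\partial\Omega)>\delta\}$ the inner integral is over $B_{\delta}(0)$, and in polar coordinates, using $\int_{\S^{N-1}}|\omega\cdot V(x)|^{2}\,d\mathcal H^{N-1}(\omega)=2K_{N}|V(x)|^{2}$ (from \eqref{valoreK}, by splitting $V$ into real and imaginary parts), it equals $2K_{N}|V(x)|^{2}\delta^{2-2s}/(2-2s)$; hence this portion of the main term equals $K_{N}\delta^{2-2s}\int_{\Omega_{\delta}}|V|^{2}\to K_{N}\int_{\Omega_{\delta}}|V|^{2}$ as $s\nearrow1$. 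For $x\in\Omega\setminus\Omega_{\delta}$ the crude bound $|h\cdot V(x)|^{2}\le|h|^{2}|V(x)|^{2}$ bounds the remaining portion by $\tfrac12|\S^{N-1}|\delta^{2-2s}\int_{\Omega\setminus\Omega_{\delta}}|V|^{2}$, which vanishes first as $s\nearrow1$ and then as $\delta\to0$. Therefore $\lim_{s\nearrow1}\Phi_{s}(v)^{2}=K_{N}\int_{\Omega}|V|^{2}=\Psi(v)^{2}$.

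To conclude, given $u\in H^{1}_{A}(\Omega)$ and $\eta>0$, I would pick $v\in C^{\infty}(\bar\Omega)$ with $\|u-v\|_{H^{1}_{A}(\Omega)}<\eta$; then $|\Phi_{s}(u)-\Psi(u)|\le\Phi_{s}(u-v)+|\Phi_{s}(v)-\Psi(v)|+\Psi(u-v)$, so the uniform bound of the second paragraph and the smooth case of the third give $\limsup_{s\nearrow1}|\Phi_{s}(u)-\Psi(u)|\le(C+\sqrt{K_{N}})\,\eta$, and letting $\eta\to0$ yields $\Phi_{s}(u)^{2}\to\Psi(u)^{2}$, which is the claimed identity. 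The step I expect to be the real obstacle is the local computation of the third paragraph: pinning down the exact constant $K_{N}$ while controlling the boundary layer $\Omega\setminus\Omega_{\delta}$, where $B_{\delta}(x)$ is truncated by $\partial\Omega$, and justifying the order of the limits $s\nearrow1$ then $\delta\to0$. Everything else is either classical (the uniform Gagliardo bound) or routine (the Taylor expansion and the density argument).
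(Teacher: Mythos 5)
Your proof is correct, and it follows the same broad strategy as the paper -- uniform bound independent of $s$, exact limit for smooth functions, conclusion by a density/seminorm triangle-inequality argument -- but several of the technical steps are implemented differently and are worth contrasting. For the uniform bound, the paper proves a genuinely ``magnetic'' translation estimate (Lemma~\ref{stima1}) by writing $u(y+h)-e^{\i h\cdot A(y+h/2)}u(y)=\int_0^1\frac{d}{dt}\big[e^{\i(1-t)h\cdot A(y+h/2)}u(y+th)\big]\,dt$, whose derivative produces the magnetic gradient directly, and then combines it with a magnetic Sobolev extension (Lemma~\ref{extlem}) and a general mollifier bound (Lemma~\ref{firstLemma}); you instead split $w(x)-e^{\i(x-y)\cdot A}w(y)=(w(x)-w(y))+(1-e^{\i(x-y)\cdot A})w(y)$, control the first term by the classical non-magnetic BBM uniform estimate (using $H^1_A\cong H^1$), and the second by $|1-e^{\i t}|\le|t|$ together with an elementary computation of $(1-s)\int_\Omega|x-y|^{2-N-2s}\,dx$. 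Your route is more elementary and reduces to known scalar facts; the paper's is more intrinsic to the magnetic structure and its translation lemma is reused elsewhere (e.g.\ Lemma~\ref{lemmino}). Second, the paper proves the limit in the abstract form of Theorem~\ref{general}, for an arbitrary sequence of nonnegative radial approximate identities $\rho_n$ satisfying \eqref{normaliz}--\eqref{fourtheq}, and then specializes to $\rho_n(r)=2(1-s_n)r^{2-N-2s_n}\psi_0(r)$; you work directly with the kernel $(1-s)|x-y|^{-N-2s}$, which is a slight simplification at the cost of generality. Third, in the smooth case the paper integrates the inner variable over $B(x,R_x)$ with $R_x=\mathrm{dist}(x,\partial\Omega)$ and concludes with dominated convergence in $x$, so no boundary layer appears; you fix $\delta$, estimate the boundary layer $\Omega\setminus\Omega_\delta$ crudely by $\tfrac12|\S^{N-1}|\delta^{2-2s}\int_{\Omega\setminus\Omega_\delta}|V|^2$, and send $\delta\to0$ after $s\nearrow1$. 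Both variants are standard in the BBM literature and equally rigorous. One small point you handle more carefully than the paper: you explicitly note that $A\big(\tfrac{x+y}{2}\big)$ may require evaluating $A$ outside $\bar\Omega$ when $\Omega$ is nonconvex, and extend $A$ to a $C^2$ neighbourhood of the convex hull of $\bar\Omega$, which is indeed needed for the double integral to make sense.
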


\noindent
As a variant of Theorem~\ref{main}, if $H^1_{0,A}(\Omega)$ denotes the closure of $C^\infty_c(\Omega)$ in $H^1_A(\Omega)$, we get the following

\begin{theorem}
	\label{main2}
	Let $\Omega\subset\R^N$ be an open bounded set with Lipschitz boundary.
	Assume that $A:\R^N\to\R^N$ is locally bounded and $A\in C^2(\bar{\Omega})$.\ Then,
	for every  $u\in H^1_{0,A}(\Omega)$, we have
	$$
	\lim_{s\nearrow 1}(1-s)\int_{\R^{2N}}\frac{|u(x)-e^{\i (x-y)\cdot A\left(\frac{x+y}{2}\right)}u(y)|^2}{|x-y|^{N+2s}}dxdy=
	K_N\int_{\Omega}|\nabla u-\i A(x)u|^2dx.
	$$
\end{theorem}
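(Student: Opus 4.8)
The plan is to reduce Theorem~\ref{main2} to Theorem~\ref{main} together with a careful control of the "tail" contribution coming from the integration over $\R^{2N}\setminus(\Omega\times\Omega)$. Write the full double integral over $\R^{2N}$ as the sum of the integral over $\Omega\times\Omega$ and twice the integral over $\Omega\times\Omega^c$ (the contribution of $\Omega^c\times\Omega^c$ vanishes because $u\equiv 0$ outside $\Omega$). By Theorem~\ref{main}, the $\Omega\times\Omega$ piece already converges, as $s\nearrow 1$, to $K_N\int_\Omega|\nabla u-\iu A(x)u|^2\,dx$, provided $u\in H^1_A(\Omega)$; since $H^1_{0,A}(\Omega)\subset H^1_A(\Omega)$ this hypothesis is met. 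Hence it suffices to prove that
$$
\lim_{s\nearrow 1}(1-s)\int_\Omega\int_{\Omega^c}\frac{|u(x)-e^{\iu(x-y)\cdot A(\frac{x+y}{2})}u(y)|^2}{|x-y|^{N+2s}}\,dy\,dx=0.
$$
Because $u(y)=0$ for $y\in\Omega^c$, the phase factor disappears in the tail and the numerator is simply $|u(x)|^2$, so the tail term equals $(1-s)\int_\Omega|u(x)|^2\big(\int_{\Omega^c}|x-y|^{-N-2s}\,dy\big)dx$.

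For the tail estimate I would use an approximation argument together with the standard bound $\int_{\Omega^c}|x-y|^{-N-2s}\,dy\le \int_{|z|\ge \mathrm{dist}(x,\partial\Omega)}|z|^{-N-2s}\,dz=\frac{\omega_{N-1}}{2s}\,\mathrm{dist}(x,\partial\Omega)^{-2s}$, where $\omega_{N-1}=\mathcal H^{N-1}(\S^{N-1})$. Thus the tail term is bounded by $\frac{\omega_{N-1}(1-s)}{2s}\int_\Omega\frac{|u(x)|^2}{\mathrm{dist}(x,\partial\Omega)^{2s}}\,dx$. For $u\in C^\infty_c(\Omega)$ the distance weight is harmless (it is bounded on the compact support), so the tail is $O(1-s)\to 0$. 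For general $u\in H^1_{0,A}(\Omega)$, take $u_k\in C^\infty_c(\Omega)$ with $u_k\to u$ in $H^1_A(\Omega)$; by a Hardy-type inequality on Lipschitz domains, $\int_\Omega |v|^2\mathrm{dist}(x,\partial\Omega)^{-2s}\,dx\le C_s\|v\|_{H^1(\Omega)}^2$ with $C_s$ bounded as $s\nearrow 1$ by $C/(1-s)$ (this is the classical fractional Hardy inequality), so the tail term for $v=u-u_k$ is controlled by $C\|u-u_k\|_{H^1_A(\Omega)}^2$ uniformly in $s$ near $1$; letting first $k\to\infty$ kills this error, and then $s\nearrow 1$ kills the smooth part.

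An alternative, cleaner route avoiding the fractional Hardy inequality is to observe that the tail functional $J_s(u):=(1-s)\int_\Omega\int_{\Omega^c}|u(x)|^2|x-y|^{-N-2s}\,dy\,dx$ is, by the estimate above and monotone convergence, dominated uniformly for $s$ near $1$ by a multiple of $\|u\|_{L^2}^2$ only when $u$ is supported away from $\partial\Omega$; in general one splits $\Omega=\Omega_\delta\cup(\Omega\setminus\Omega_\delta)$ with $\Omega_\delta=\{x:\mathrm{dist}(x,\partial\Omega)>\delta\}$, bounds the $\Omega_\delta$-part by $C(1-s)\delta^{-2s}\|u\|_{L^2}^2$, and uses the convergence in Theorem~\ref{main} applied on a slightly larger domain plus $\|u\|_{L^2(\Omega\setminus\Omega_\delta)}\to 0$ as $\delta\to 0$ to handle the boundary layer; one lets $s\nearrow1$ first and $\delta\to0$ afterwards. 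The main obstacle is precisely this interchange of limits near $\partial\Omega$: one must ensure the boundary-layer contribution is small uniformly in $s$, which is exactly where the Lipschitz regularity of $\partial\Omega$ and the density of $C^\infty_c(\Omega)$ in $H^1_{0,A}(\Omega)$ enter. Everything else is a direct consequence of Theorem~\ref{main} and the fact that the magnetic phase is invisible in the tail.
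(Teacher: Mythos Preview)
Your proposal is correct and follows essentially the same route as the paper: split the $\R^{2N}$ integral into the $\Omega\times\Omega$ piece (handled by Theorem~\ref{main}) and the tail $R_s=2(1-s)\int_\Omega\int_{\Omega^c}|u(x)|^2|x-y|^{-N-2s}\,dy\,dx$, show $R_s\to 0$ for $u\in C^\infty_c(\Omega)$ by the trivial distance bound, and then pass to general $u\in H^1_{0,A}(\Omega)$ by density. The only difference is in the uniform-in-$s$ control needed for the density step: the paper invokes Lemma~\ref{lemmino}, which bounds the \emph{full} functional $(1-s)\int_{\R^{2N}}\cdots$ by $C\|u\|_{H^1_A(\Omega)}^2$ via the translation estimate of Lemma~\ref{stima1}, whereas you control only the tail via a Hardy-type weight. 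Your route works too (indeed, since $\Omega$ is bounded Lipschitz, the classical $H^1_0$ Hardy inequality $\int_\Omega|v|^2\mathrm{dist}(x,\partial\Omega)^{-2}\,dx\le C\|\nabla v\|_{L^2}^2$ combined with $\mathrm{dist}^{-2s}\le(\mathrm{diam}\,\Omega)^{2-2s}\mathrm{dist}^{-2}$ gives a bound uniform in $s$, even better than the $C/(1-s)$ you quote), but the paper's Lemma~\ref{lemmino} is more self-contained and avoids importing the boundary Hardy inequality.
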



\vskip14pt
\noindent
{\bf Notations.} Let $\Omega\subset\R^N$ be an open set.\ We denote by $L^2(\Omega,\C)$ 
the Lebesgue space of complex valued functions with summable square. 
For $s\in (0,1)$, the magnetic Gagliardo semi-norm is 
$$
[u]_{H^s_A(\Omega)}:=\sqrt{\int_{\Omega}\int_{\Omega}\frac{|u(x)-e^{\i (x-y)\cdot A\left(\frac{x+y}{2}\right)}u(y)|^2}{|x-y|^{N+2s}}dxdy}.
$$
We denote by $H^s_A(\Omega)$ the space of functions $u\in L^2(\Omega,\C)$ such that  $[u]_{H^s_A(\Omega)}<\infty$ endowed with 
$$
\|u\|_{H^s_A(\Omega)}:=\sqrt{\|u\|_{L^2(\Omega)}^2+[u]_{H^s_A(\Omega)}^2}.
$$
We denote by $B(x_0,R)$ the ball in $\R^N$ of center $x_0$ and radius $R>0$.\
For any set $E\subset \R^N$ we will denote by $E^c$ the complement
of $E$. For $A,B\subset\R^N$ open and bounded, $A\Subset B$ means $\bar A\subset B$.
\smallskip

\section{Preliminary results}

\noindent
We start with the following Lemma.

\begin{lemma}
	\label{stima1}
Assume that $A:\R^N\to\R^N$ is locally bounded. Then, 
for any	compact $V\subset \R^N$ with $\Omega \Subset V$,
there exists $C=C(A,V)>0$ such that
\begin{equation*}
\int_{\R^{N}} |u(y+h)-e^{\i  h\cdot A\left(y+\frac{h}{2}\right)}u(y)|^{2}dy\leq C |h|^{2} \|u\|^{2}_{H^1_{A}(\R^{N})}, 
\end{equation*}
for all $u\in H^1_{A}(\R^{N})$ such that $u=0$ on $V^c$ and any $h\in \R^{N}$ with $|h|\leq 1$.
\end{lemma}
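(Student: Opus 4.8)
The plan is to deduce this estimate from the classical $L^2$ bound on translations of ordinary $H^1(\R^N)$ functions, the point being that under the stated hypotheses $u$ is in fact an ordinary Sobolev function. First I would observe that, since $u$ vanishes outside the compact set $V$ and $A$ is locally bounded, $\|Au\|_{L^2(\R^N)}\le\|A\|_{L^\infty(V)}\|u\|_{L^2(\R^N)}<\infty$; writing $\nabla u=(\nabla u-\i Au)+\i Au$ in the distributional sense and using that $u\in H^1_A(\R^N)$ (so that $\nabla u-\i Au\in L^2(\R^N)$), we conclude $u\in H^1(\R^N)$ with
$$
\|\nabla u\|_{L^2(\R^N)}\le\Big(\int_{\R^N}|\nabla u-\i Au|^2\,dx\Big)^{1/2}+\|A\|_{L^\infty(V)}\,\|u\|_{L^2(\R^N)}.
$$

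Next, for $|h|\le 1$ I would split pointwise
$$
u(y+h)-e^{\i h\cdot A(y+h/2)}u(y)=\big(u(y+h)-u(y)\big)+u(y)\big(1-e^{\i h\cdot A(y+h/2)}\big)
$$
and bound the two summands in $L^2(\R^N)$ separately. For the first, the standard difference-quotient inequality for $H^1(\R^N)$ gives $\|u(\cdot+h)-u\|_{L^2(\R^N)}\le|h|\,\|\nabla u\|_{L^2(\R^N)}$. For the second, set $W:=V+\closure{B(0,1)}$, a compact set, and $M:=\sup_W|A|<\infty$ by local boundedness; since $u(y)=0$ for $y\notin V$ and $y+h/2\in W$ whenever $y\in V$ and $|h|\le 1$, the elementary bound $|1-e^{\i t}|\le|t|$ for $t\in\R$ yields $|u(y)\big(1-e^{\i h\cdot A(y+h/2)}\big)|\le M|h|\,|u(y)|$ for a.e.\ $y$, hence $\|u(\cdot)\big(1-e^{\i h\cdot A(\cdot+h/2)}\big)\|_{L^2(\R^N)}\le M|h|\,\|u\|_{L^2(\R^N)}$.

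Combining the two bounds with the first step,
$$
\Big(\int_{\R^N}|u(y+h)-e^{\i h\cdot A(y+h/2)}u(y)|^2\,dy\Big)^{1/2}\le|h|\big(\|\nabla u\|_{L^2(\R^N)}+M\,\|u\|_{L^2(\R^N)}\big)\le C(A,V)\,|h|\,\|u\|_{H^1_A(\R^N)},
$$
with $C(A,V)$ depending only on $M$ and $\|A\|_{L^\infty(V)}$; squaring gives the claim. I do not expect a genuine obstacle here: the only delicate points are (i) recognizing that $u$ belongs to $H^1(\R^N)$ — this is exactly where both hypotheses enter, the compact support of $u$ converting the merely local bound on $A$ into a global $L^2$ bound on $Au$ — and (ii) checking that $y+h/2$ stays in a fixed compact set when $y\in\supp u$ and $|h|\le 1$, which is what forces the restriction $|h|\le 1$ and is handled by slightly enlarging $V$.
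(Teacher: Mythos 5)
Your proof is correct, but it takes a genuinely different route from the paper's. You first observe that $u\in H^1_A(\R^N)$ with compact support and locally bounded $A$ forces $u\in H^1(\R^N)$, then split $u(y+h)-e^{\i h\cdot A(y+h/2)}u(y)$ into the plain translate difference $u(y+h)-u(y)$ plus the phase-error term $u(y)(1-e^{\i h\cdot A(y+h/2)})$, and estimate each via the classical $H^1$ difference-quotient bound and $|1-e^{\i t}|\le|t|$ respectively. The paper instead applies the fundamental theorem of calculus along a ``magnetic-covariant'' path: it sets $\varphi(t)=e^{\i(1-t)h\cdot A(y+h/2)}u(y+th)$, notes $\varphi'(t)$ is the magnetic gradient $\nabla u-\i Au$ evaluated at $y+th$ (with $A$ frozen at $y+h/2$), integrates in $t$, and then controls the error coming from the discrepancy between $A(y+h/2)$ and $A(y+th)$ using local boundedness. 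Your version is more elementary and avoids the density step (the $H^1$ translation estimate is available for all $u\in H^1(\R^N)$, not just smooth ones), at the cost of exiting the magnetic framework by passing through the ordinary Sobolev space; this is essentially the norm-equivalence $H^1_A\simeq H^1$ on fixed compacts, which the paper itself records and exploits in Lemma~\ref{extlem}. The paper's FTC-along-a-path argument stays intrinsic to the magnetic structure and produces the magnetic seminorm directly, which is closer in spirit to how such estimates are usually organized in magnetic Sobolev theory and would adapt more easily to settings where the reduction to $H^1$ is less clean. Both arguments use the compact support and local boundedness of $A$ in the same essential way, and both yield constants of the same type.
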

\begin{proof}
Assume first that $u\in C_{0}^{\infty}(\R^{N})$ 
with $u=0$ on $V^c$.  Fix $y, h\in \R^{N}$ and define
\[
\varphi(t):=e^{\i (1-t) h\cdot A\left(y+\frac{h}{2}\right)}u(y+th),\quad\, t\in [0,1].
\]
Then we have
\[
u(y+h)-e^{\i  h\cdot A\left(y+\frac{h}{2}\right)}u(y)=\varphi(1)-\varphi(0)=\int_{0}^{1}\varphi^{\prime}(t)dt,
\]
and since
\[
\varphi^{\prime}(t)=e^{\i (1-t) h\cdot A\big(y+\frac{h}{2}\big)}\,h\cdot\Big(\nabla_{y}u(y+th)-\i A\Big(y+\frac{h}{2}\Big)u(y+th)\Big),
\]
by H\"older inequality we get
\[
|u(y+h)-e^{\i  h\cdot A\left(y+\frac{h}{2}\right)}u(y)|^{2}\leq |h|^{2}\int_{0}^{1} \Big|\nabla_{y}u(y+th)-\i A\Big(y+\frac{h}{2}\Big)u(y+th)\Big|^{2}dt.
\]
Therefore, integrating with respect to $y$ over $\R^N$ and using Fubini's Theorem, we get
\begin{align*}
\int_{\R^{N}} |u(y+h)-e^{\i  h\cdot A\left(y+\frac{h}{2}\right)}u(y)|^{2}dy &\leq |h|^{2}
\int_{0}^{1}dt\int_{\R^{N}} \Big|\nabla_{y}u(y+th)-\i A\Big(y+\frac{h}{2}\Big)u(y+th)\Big|^{2}dy\nonumber\\
&=|h|^{2} \int_{0}^{1}dt\int_{\R^{N}} \Big|\nabla_{z}u(z)-\i A\Big(z+\frac{1-2t}{2}h\Big)u(z)\Big|^{2}dz\nonumber\\
&\leq 2 |h|^{2} \int_{\R^{N}} |\nabla_{z}u(z)-\i A\left(z\right)u(z)|^{2}dz \\&
+2|h|^{2} \int_{V} \Big|A\Big(z+\frac{1-2t}{2}h\Big)-A(z)\Big|^{2}|u(z)|^{2}dz.
\end{align*}
Then, since $A$ is bounded on the set $V$, we have for some constant $C>0$
\begin{align*}
\int_{\R^{N}} |u(y+h)-e^{\i  h\cdot A\left(y+\frac{h}{2}\right)}u(y)|^{2}dy&\leq C |h|^{2}\left( \int_{\R^{N}} |\nabla_{z}u(z)-\i
A\left(z\right)u(z)|^{2}dz+ \int_{\R^{N}} |u(z)|^{2}dz\right)\nonumber\\&=C |h|^{2} \|u\|^{2}_{H^1_{A}(\R^{N})}.
\end{align*}
When dealing with a general $u$ we can argue by a density argument.
\end{proof}

\begin{lemma}
	\label{extlem}
Let $\Omega\subset\R^N$ be an open bounded set with Lipschitz boundary,
$V\subset\R^N$ a compact set with $\Omega \Subset V$ and $A:\R^N\to\R^N$ locally bounded.
Then there exists $C(\Omega,V,A)>0$ such that for any $u\in H^1_A(\Omega)$
there exists $Eu\in H^1_A(\R^N)$ such that $Eu=u$ in $\Omega$, $Eu=0$ in $V^c$ and
$$
\|Eu\|_{H^1_A(\R^N)}\leq C(\Omega,V,A)\|u\|_{H^1_A(\Omega)}.
$$
\end{lemma}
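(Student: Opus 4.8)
The plan is to build $Eu$ in two stages: first reduce to the non-magnetic problem by gauge-type factorization, then invoke a classical Sobolev extension theorem for $H^1(\Omega)$, and finally transfer the bounds back.

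\medskip

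\emph{Step 1: reduction to the scalar case.} Since $A\in C^2(\bar\Omega)$, I would like to remove the magnetic term by a local gauge transformation. A clean route is to avoid choosing a potential for $A$ (which need not exist globally) and instead observe that the differential inequality $|\nabla(\zeta u)-\iu A\,\zeta u|\le |\zeta|\,|\nabla u-\iu A u|+|\nabla\zeta|\,|u|$ holds for any cutoff $\zeta$, so the magnetic gradient enjoys the same Leibniz/localization calculus as the ordinary gradient. Hence it suffices to construct an extension that is bounded in the \emph{ordinary} $H^1$-norm and controls $\|u\|_{L^2}$: indeed, for any $v\in H^1(\R^N,\C)$ supported in $V$, since $A$ is bounded on $V$,
\[
\|v\|_{H^1_A(\R^N)}^2=\int_{\R^N}|\nabla v-\iu A v|^2\,dx+\|v\|_{L^2}^2\le C(\|\nabla v\|_{L^2}^2+\|v\|_{L^2}^2)=C\|v\|_{H^1(\R^N)}^2,
\]
and conversely the same two-sided equivalence holds on $\Omega$. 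Thus the $H^1_A$-norms on $\Omega$ and on $\R^N$ (for functions supported in $V$) are each equivalent to the corresponding ordinary $H^1$-norms, with constants depending only on $\|A\|_{L^\infty(V)}$.

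\medskip

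\emph{Step 2: classical extension plus truncation.} Since $\partial\Omega$ is Lipschitz, there is a bounded linear extension operator $E_0:H^1(\Omega)\to H^1(\R^N)$ with $\|E_0 w\|_{H^1(\R^N)}\le C(\Omega)\|w\|_{H^1(\Omega)}$ (applied componentwise to real and imaginary parts). Fix a cutoff $\eta\in C_c^\infty(\R^N)$ with $\eta\equiv 1$ on a neighborhood of $\bar\Omega$ and $\supp\eta\subset\mathrm{int}(V)$; this is possible because $\Omega\Subset V$. Define $Eu:=\eta\,E_0 u$. Then $Eu=u$ on $\Omega$, $Eu=0$ on $V^c$, and $\|Eu\|_{H^1(\R^N)}\le C(\Omega,V)\|u\|_{H^1(\Omega)}$. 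Combining with the norm equivalences of Step~1,
\[
\|Eu\|_{H^1_A(\R^N)}\le C\|Eu\|_{H^1(\R^N)}\le C(\Omega,V)\|u\|_{H^1(\Omega)}\le C(\Omega,V,A)\|u\|_{H^1_A(\Omega)},
\]
which is the claimed estimate.

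\medskip

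\emph{Main obstacle.} The only genuinely delicate point is making Step~1 rigorous when $u$ is merely in $H^1_A(\Omega)$ rather than smooth: one must check that $u\in H^1_A(\Omega)$ implies $u\in H^1(\Omega)$ in the ordinary sense (so that the classical extension operator applies). This follows from $\nabla u=(\nabla u-\iu A u)+\iu A u$ with both terms in $L^2(\Omega)$ because $A$ is bounded on $\bar\Omega$; I would spell this out, together with the corresponding reverse bound, using density of smooth functions only if needed. Everything else is the standard Lipschitz-domain extension theorem plus a product rule, so no further subtlety arises.
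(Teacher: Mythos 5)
Your proposal is correct and takes essentially the same route as the paper: reduce to the classical $H^1$ extension theorem on Lipschitz domains via the two-sided equivalence of the $H^1_A$- and $H^1$-norms on bounded sets, which holds because $A$ is locally bounded. The only cosmetic differences are that the paper invokes a version of the extension theorem (Evans, Thm.~1, p.~254) that already produces an extension supported in $V$, so no cutoff is needed, and that your opening reference to $A\in C^2(\bar\Omega)$ is superfluous for this lemma---local boundedness suffices, as your argument in fact uses.
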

\begin{proof}
Observe that, for any bounded set $W\subset\R^N$ there exist $C_1(A,W),C_2(A,W)>0$ with
$$
C_1(A,W)\|u\|_{H^1(W)}\leq \|u\|_{H^1_A(W)}\leq C_2(A,W)\|u\|_{H^1(W)}, 
\quad\text{for any $u\in H^1(W)$.}
$$
This follows easily, via simple computations, by the definition of the norm of $H^1_A(W)$ and 
in view of the local boundedness assumption on the potential $A$.
Now, by the standard extension property for $H^1(\Omega)$ (see e.g.\ \cite[Theorem 1, p.254]{evans})
there exists $C(\Omega,V)>0$ such that for any $u\in H^1(\Omega)$
there exists a function $Eu\in H^1(\R^N)$ such that $Eu=u$ in $\Omega$, $Eu=0$ in $V^c$ and
$\|Eu\|_{H^1(\R^N)}\leq C(\Omega,V)\|u\|_{H^1(\Omega)}.$ Then, for any $u\in H^1_A(\Omega)$, we get
\begin{align*}
\|Eu\|_{H^1_A(\R^N)}&=\|Eu\|_{H^1_A(V)}\leq C_2(A,V) \|Eu\|_{H^1(V)}= C_2(A,V) \|Eu\|_{H^1(\R^N)} \\
&\leq C(\Omega,V)C_2(A,V)\|u\|_{H^1(\Omega)}\leq C(\Omega,V)C_2(A,V)C_1^{-1}(A,\Omega)\|u\|_{H^1_A(\Omega)},
\end{align*}
which concludes the proof.
%
%
%
\end{proof}

\noindent
We can now prove the following result:

\begin{lemma}\label{firstLemma}
Let $A:\R^N\to\R^N$ be locally bounded.
Let $u\in H^1_{A}(\Omega)$ and $\rho\in L^{1}(\R^{N})$ with $\rho\geq0$. Then
\[
\int_{\Omega}\int_{\Omega}\frac{|u(x)-e^{\i (x-y)\cdot A\left(\frac{x+y}{2}\right)}u(y)|^2}{|x-y|^{2}}\rho(x-y)\,dxdy\leq C\|\rho\|_{L^{1}}\|u\|^{2}_{H^1_{A}(\Omega)}
\]
where $C$ depends only on $\Omega$ and $A$.
\end{lemma}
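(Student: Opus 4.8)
The plan is to reduce the double-integral estimate over $\Omega\times\Omega$ to an estimate of the translation differences $u(\cdot+h)-e^{\i h\cdot A(\cdot+h/2)}u(\cdot)$ on $\R^N$, for which Lemma~\ref{stima1} furnishes exactly the bound $C|h|^2\|Eu\|_{H^1_A(\R^N)}^2$ after extending $u$. First I would apply Lemma~\ref{extlem} to obtain an extension $Eu\in H^1_A(\R^N)$ with $Eu=u$ on $\Omega$, $Eu=0$ on $V^c$ for a fixed compact $V$ with $\Omega\Subset V$, and $\|Eu\|_{H^1_A(\R^N)}\le C(\Omega,V,A)\|u\|_{H^1_A(\Omega)}$; since $\Omega$ is fixed I may absorb the dependence on $V$ into the constant. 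Then I would perform the change of variables $h=x-y$ in the double integral: writing $x=y+h$, and noting that on $\Omega\times\Omega$ we have $|h|\le \operatorname{diam}\Omega$, the left-hand side is bounded by
\[
\int_{\R^N}\frac{\rho(h)}{|h|^2}\left(\int_{\R^N}|Eu(y+h)-e^{\i h\cdot A(y+h/2)}Eu(y)|^2\,dy\right)dh.
\]

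The slight subtlety here is that the exponential factor in the integrand over $\Omega\times\Omega$ is $e^{\i(x-y)\cdot A((x+y)/2)}=e^{\i h\cdot A(y+h/2)}$, which matches precisely the phase appearing in Lemma~\ref{stima1} with the roles of the variables as there; so after restricting the inner integral from $\R^N$ to $\Omega$ (legitimate since we only enlarge the domain and the integrand is nonnegative, using $Eu=u$ on $\Omega$) we may apply Lemma~\ref{stima1}. For $|h|\le 1$ this gives the inner integral $\le C|h|^2\|Eu\|_{H^1_A(\R^N)}^2$, so the corresponding piece of the $h$-integral is controlled by $C\|Eu\|_{H^1_A(\R^N)}^2\int_{\{|h|\le 1\}}\rho(h)\,dh\le C\|\rho\|_{L^1}\|u\|_{H^1_A(\Omega)}^2$. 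For $|h|>1$ one cannot invoke Lemma~\ref{stima1}, but there the weight $1/|h|^2$ is bounded by $1$ and one simply estimates $|Eu(y+h)-e^{\i h\cdot A(y+h/2)}Eu(y)|^2\le 2|Eu(y+h)|^2+2|Eu(y)|^2$, whose $y$-integral is $4\|Eu\|_{L^2(\R^N)}^2\le C\|u\|_{H^1_A(\Omega)}^2$; multiplying by $\int_{\{|h|>1\}}\rho(h)\,dh\le\|\rho\|_{L^1}$ closes this range as well.

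Adding the two contributions yields the claimed bound with $C=C(\Omega,A)$. The only point requiring care — and the one I would write out explicitly — is the passage from the $\Omega\times\Omega$ integral to the $\R^N$ translation integral: one must check that enlarging the inner domain of integration to all of $\R^N$ is harmless (nonnegativity of the integrand and $Eu=u$ on $\Omega$), and that the substitution $h=x-y$ together with Fubini is justified, which it is because the whole integrand is nonnegative and measurable, so Tonelli applies without any a priori integrability. I do not expect a genuine obstacle; the content of the lemma is essentially a repackaging of Lemma~\ref{stima1} combined with the extension Lemma~\ref{extlem}, with the $|h|\le 1$ versus $|h|>1$ split being the only mildly technical ingredient.
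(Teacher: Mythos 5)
Your proposal follows essentially the same route as the paper: extend via Lemma~\ref{extlem}, pass to a translation integral over $\R^N\times\R^N$, and invoke Lemma~\ref{stima1}. The one place you are actually more careful than the paper is the explicit split at $|h|=1$: Lemma~\ref{stima1} is stated only for $|h|\le 1$, yet the paper's proof applies the estimate $\int_{\R^N}|\tilde u(y+h)-e^{\i h\cdot A(y+h/2)}\tilde u(y)|^2\,dy\le C|h|^2\|u\|_{H^1_A(\Omega)}^2$ for all $h$ without comment; your trivial $L^2$ bound on the $|h|>1$ range (where $1/|h|^2\le 1$ and the integrand is controlled by $4\|Eu\|_{L^2}^2$) is the clean way to close that gap, and is correct.
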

\begin{proof}
Let $V\subset \R^N$ be a fixed compact set with $\Omega\Subset V$. 
Given $u\in H^1_A(\Omega)$, by Lemma~\ref{extlem},
there exists a function $\tilde u\in H^{1}_{A}(\R^{N})$ with $\tilde u=u$ on $\Omega$ and $\tilde u=0$ on $V^c$.
By Lemma \ref{stima1} and \ref{extlem}, 
\begin{equation}
\int_{\R^{N}} |\tilde u(y+h)-e^{\i  h\cdot A\left(y+\frac{h}{2}\right)}\tilde u(y)|^{2}dy
\leq C |h|^{2} \|\tilde u\|^{2}_{H^1_{A}(\R^{N})}\leq C |h|^{2} \|u\|^{2}_{H^1_{A}(\Omega)}, \label{secondeq}
\end{equation}
for some positive constant $C$ depending on $\Omega$ and $A$. Then, in light of \eqref{secondeq}, we get
\begin{align*}
\int_{\Omega}\int_{\Omega}\frac{|u(x)-e^{\i (x-y)\cdot A\left(\frac{x+y}{2}\right)}u(y)|^2}{|x-y|^{2}}\rho(x-y)\,dxdy
&\leq\int_{\R^{N}}\int_{\R^{N}}\rho(h)\frac{|\tilde u(y+h)-e^{\i  h\cdot A\left(y+\frac{h}{2}\right)}\tilde u(y)|^{2}}{|h|^{2}}dydh\nonumber
\\& =\int_{\R^{N}}\frac{\rho(h)}{|h|^{2}}\Big(\int_{\R^{N}}|\tilde u(y+h)-e^{\i  h\cdot A\left(y+\frac{h}{2}\right)}\tilde u(y)|^{2}dy\Big)dh
\\& \leq C\|\rho\|_{L^{1}} \|u\|^{2}_{H^1_{A}(\Omega)},
\end{align*}
which concludes the proof.
\end{proof}


\begin{lemma}\label{lemmino}
	Let $A:\R^N\to\R^N$ be locally bounded
	and let $u\in H^1_{0,A}(\Omega)$.\ Then, we have 
	\[
	(1-s)\int_{\R^{2N}}\frac{|u(x)-e^{\i (x-y)\cdot A\left(\frac{x+y}{2}\right)}u(y)|^2}{|x-y|^{N+2s}}dxdy\leq C\|u\|^{2}_{H^1_{A}(\Omega)}
	\]
	where $C$ depends only on $\Omega$ and $A$.
\end{lemma}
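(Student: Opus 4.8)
The plan is to reduce the estimate over $\R^{2N}$ to a bounded region plus a harmless tail, and then invoke Lemma~\ref{firstLemma} on the bounded part after absorbing the singular kernel into an $L^1$ weight. Since $u\in H^1_{0,A}(\Omega)$, by definition $u=0$ a.e.\ outside $\Omega$, so in the double integral over $\R^{2N}$ we may split the domain according to whether $x,y\in\Omega$ or not. When both $x\notin\Omega$ and $y\notin\Omega$ the integrand vanishes. So the integral decomposes as
$$
\int_{\R^{2N}}=\int_{\Omega}\int_{\Omega}+2\int_{\Omega}\int_{\Omega^c},
$$
where in the mixed term we used that on $\Omega^c$ one of the two terms in $|u(x)-e^{\i(x-y)\cdot A(\frac{x+y}{2})}u(y)|^2$ is zero, giving $|u(x)|^2$ (or $|u(y)|^2$) as the integrand.

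For the mixed (tail) term, fix $x\in\Omega$ and integrate $|u(x)|^2|x-y|^{-N-2s}$ over $y\in\Omega^c$. Since $\Omega$ is bounded, $\operatorname{dist}(x,\Omega^c)$ can be small near $\partial\Omega$, so one should instead first approximate $u$ by $C^\infty_c(\Omega)$ functions (which is exactly what $H^1_{0,A}(\Omega)$ provides), for which $\operatorname{supp} u\Subset\Omega$ and hence $\operatorname{dist}(\operatorname{supp}u,\Omega^c)=:\delta>0$. Then
$$
(1-s)\int_{\operatorname{supp}u}|u(x)|^2\Big(\int_{\Omega^c}\frac{dy}{|x-y|^{N+2s}}\Big)dx
\le (1-s)\,\|u\|_{L^2(\Omega)}^2\,\frac{C_N}{2s\,\delta^{2s}}\le C\|u\|_{H^1_A(\Omega)}^2,
$$
uniformly for $s$ bounded away from $0$ (say $s\in[1/2,1)$, which is the relevant range). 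For the main term $\int_\Omega\int_\Omega$, write $|x-y|^{-N-2s}=|x-y|^{-2}\rho_s(x-y)$ with $\rho_s(h):=|h|^{-N-2s+2}\mathbf 1_{B(0,d)}(h)$, where $d=\operatorname{diam}\Omega$; then $\rho_s\in L^1(\R^N)$ with $\|\rho_s\|_{L^1}=C_N d^{2-2s}/(2-2s)$, so $(1-s)\|\rho_s\|_{L^1}$ is bounded uniformly in $s$. Applying Lemma~\ref{firstLemma} with $\rho=\rho_s$ yields
$$
(1-s)\int_\Omega\int_\Omega\frac{|u(x)-e^{\i(x-y)\cdot A(\frac{x+y}{2})}u(y)|^2}{|x-y|^{N+2s}}dxdy
\le C(1-s)\|\rho_s\|_{L^1}\|u\|_{H^1_A(\Omega)}^2\le C\|u\|_{H^1_A(\Omega)}^2.
$$
Adding the two contributions gives the claim for $u\in C^\infty_c(\Omega)$, and the general case follows by density since the right-hand side is a continuous function of $u$ in the $H^1_A(\Omega)$ norm and the left-hand side is (uniformly in $s$) controlled by it — one checks that the quadratic form $u\mapsto(1-s)[\cdot]$ on $\R^{2N}$ passes to the limit along an $H^1_A$-Cauchy sequence using the bound just proved applied to differences.

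The main obstacle is the tail term near $\partial\Omega$: a crude bound $\int_{\Omega^c}|x-y|^{-N-2s}dy$ blows up as $x\to\partial\Omega$, so the estimate genuinely needs the compact support of the approximating functions (hence the hypothesis $u\in H^1_{0,A}$ rather than $H^1_A$), together with the density argument to pass back to general $u\in H^1_{0,A}(\Omega)$; keeping all constants uniform in $s\nearrow 1$ throughout is the point of inserting the factor $(1-s)$ against $\|\rho_s\|_{L^1}\sim(1-s)^{-1}$.
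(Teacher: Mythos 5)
Your decomposition $\int_{\R^{2N}}=\int_\Omega\int_\Omega+2\int_\Omega\int_{\Omega^c}$ is a natural idea, and the treatment of the $\Omega\times\Omega$ block via Lemma~\ref{firstLemma} with $\rho_s(h)=|h|^{-N-2s+2}\mathbf 1_{B(0,d)}(h)$ is correct. However, the tail estimate has a genuine gap: you bound
\[
(1-s)\int_{\operatorname{supp}u}|u(x)|^2\Big(\int_{\Omega^c}\frac{dy}{|x-y|^{N+2s}}\Big)dx
\le (1-s)\|u\|_{L^2}^2\,\frac{C_N}{2s\,\delta^{2s}}
\]
with $\delta=\operatorname{dist}(\operatorname{supp}u,\partial\Omega)$. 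This $\delta$ depends on the particular test function $u$, so the resulting constant depends on $u$ and is \emph{not} the $C=C(\Omega,A)$ the lemma asserts. This is not a cosmetic issue: it breaks the final density step. To pass from $C^\infty_c(\Omega)$ to $H^1_{0,A}(\Omega)$ along an $H^1_A$-Cauchy sequence $(u_n)$, you need the bound to hold for the differences $u_n-u_m$ with a constant independent of $n,m$; but $\operatorname{supp}(u_n-u_m)$ may approach $\partial\Omega$, forcing $\delta\to 0$ and the constant $\delta^{-2s}\to\infty$. A fix within your scheme would be to replace the crude $\delta^{-2s}$ bound by a Hardy-type inequality $\int_\Omega |u|^2\,d(x,\partial\Omega)^{-2s}\,dx\le C(\Omega)\|u\|_{H^1_0}^2$ valid on Lipschitz domains, but that is an additional nontrivial ingredient you have not invoked.

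The paper sidesteps the boundary issue entirely: after extending by zero, it changes variables $x=y+h$ so the double integral becomes $\int_{\R^N}\int_{\R^N}\frac{|u(y+h)-e^{\i h\cdot A(y+h/2)}u(y)|^2}{|h|^{N+2s}}\,dy\,dh$, then splits by $|h|\le 1$ versus $|h|\ge 1$. The region $|h|\le 1$ is handled by Lemma~\ref{stima1}, whose constant $C(A,V)$ is uniform over all $u$ vanishing outside a fixed compact $V\Supset\bar\Omega$ (hence over all $u\in C^\infty_c(\Omega)$), and $(1-s)\int_{|h|\le 1}|h|^{2-N-2s}\,dh$ is bounded uniformly in $s$. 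The region $|h|\ge 1$ is handled by the trivial $L^2$ bound. This gives a constant depending only on $\Omega$ and $A$, which is exactly what the density argument needs.
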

\begin{proof}
	Given $u\in C^\infty_c(\Omega)$, by  Lemma \ref{stima1} we have
	\begin{equation*}
	\int_{\R^{N}} |u(y+h)-e^{\i  h\cdot A\left(y+\frac{h}{2}\right)}u(y)|^{2}dy\leq C |h|^{2} \|u\|^{2}_{H^1_{A}(\Omega)}, 
	\end{equation*}
	for some $C>0$ depending on $\Omega$ and $A$ and all $h\in\R^N$ with $|h|\leq 1$.
	Then, we get
	\begin{align*}
	&(1-s)\int_{\R^{2N}}\frac{|u(x)-e^{\i (x-y)\cdot A\left(\frac{x+y}{2}\right)}u(y)|^2}{|x-y|^{N+2s}}\,dxdy
	\leq (1-s)\int_{\R^{2N}}\frac{|u(y+h)-e^{\i  h\cdot A\left(y+\frac{h}{2}\right)}u(y)|^{2}}{|h|^{N+2s}}dydh   \\
	& =(1-s)\int_{\{|h|\leq 1\}}\frac{1}{|h|^{N+2s}}\Big(\int_{\R^{N}}|u(y+h)-e^{\i  h\cdot A\left(y+\frac{h}{2}\right)}u(y)|^{2}dy\Big)dh   \\
	& +4(1-s)\int_{\{|h|\geq 1\}}\frac{1}{|h|^{N+2s}}dh \|u\|_{L^2(\Omega)}^2   \\
	& \leq (1-s)\int_{\{|h|\leq 1\}}\frac{1}{|h|^{N+2s-2}}dh\|u\|^{2}_{H^1_{A}(\Omega)}+C\|u\|_{L^2}^2  \leq C\|u\|^{2}_{H^1_{A}(\Omega)}.
	\end{align*}
	The assertion then follows by a density argument.
\end{proof}


\noindent 
If $A|_{\Omega}$ is smooth (and extended if necessary to a locally bounded field on $\Omega^c$), we get the following result.

\begin{theorem}
	\label{general}
Assume that $A\in C^2(\bar{\Omega})$. Let $u\in H^1_{A}(\Omega)$ and consider a sequence 
$\left\{\rho_{n}\right\}_{n\in\N}$ of nonnegative radial functions in $L^1(\R^N)$ with
\begin{equation}
\label{normaliz}
\lim_{n\to\infty}\int_{0}^\infty\rho_{n}(r) r^{N-1}dx=1,
\end{equation}
and such that, for every $\delta>0$,
\begin{equation}
\lim_{n\to\infty}\int_{\delta}^{\infty}\rho_{n}(r)r^{N-1}dr=0
\label{fourtheq}
\end{equation}
Then, we have
\begin{equation}
\lim_{n\to\infty}\int_{\Omega}\int_{\Omega}\frac{|u(x)-e^{\i (x-y)\cdot A\left(\frac{x+y}{2}\right)}u(y)|^2}{|x-y|^{2}}\rho_{n}(x-y)\,dxdy=2K_{N} \int_{\Omega}|\nabla u-\i A(x) u|^{2}\,dx\label{thirdeq}
\end{equation}
being $K_{N}$ the constant introduced in \eqref{valoreK}.
\end{theorem}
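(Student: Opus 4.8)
The plan is to establish~\eqref{thirdeq} first for smooth $u$ and then pass to a general $u\in H^1_A(\Omega)$ by density. Write $Q_n(u)$ for the double integral on the left-hand side of~\eqref{thirdeq}. The structural observation is that $u\mapsto Q_n(u)^{1/2}$ is a seminorm on $H^1_A(\Omega)$: indeed $Q_n(u)=\|Tu\|^2_{L^2(d\mu_n)}$, where $Tu(x,y):=u(x)-e^{\i(x-y)\cdot A(\frac{x+y}{2})}u(y)$ is linear in $u$ and $d\mu_n:=\rho_n(x-y)|x-y|^{-2}\,dx\,dy$, and by Lemma~\ref{firstLemma} one has the uniform bound $Q_n(u)\le C\|\rho_n\|_{L^1}\|u\|_{H^1_A(\Omega)}^2$, $C=C(\Omega,A)$, with $\|\rho_n\|_{L^1}=\mathcal{H}^{N-1}(\S^{N-1})\int_0^\infty\rho_n(r)r^{N-1}\,dr$ bounded by~\eqref{normaliz}. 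Likewise $u\mapsto\big(\int_\Omega|\nabla u-\i Au|^2\big)^{1/2}=[u]_{H^1_A(\Omega)}$ is a seminorm dominated by $\|u\|_{H^1_A(\Omega)}$. Since $\|\cdot\|_{H^1_A(\Omega)}$ is equivalent to $\|\cdot\|_{H^1(\Omega)}$, any $u\in H^1_A(\Omega)$ is, in $H^1_A(\Omega)$, a limit of functions $u_k$ that are restrictions to $\Omega$ of functions in $C^\infty(\R^N)$ (extend via Lemma~\ref{extlem} and mollify). By subadditivity, $|Q_n(u)^{1/2}-Q_n(u_k)^{1/2}|\le Q_n(u-u_k)^{1/2}\le M\|u-u_k\|_{H^1_A(\Omega)}$ with $M:=\sup_n(C\|\rho_n\|_{L^1})^{1/2}<\infty$, and $|[u]_{H^1_A(\Omega)}-[u_k]_{H^1_A(\Omega)}|\le\|u-u_k\|_{H^1_A(\Omega)}$; so once~\eqref{thirdeq} is known for each $u_k$, letting $k\to\infty$ gives it for $u$. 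Finally we may assume $A\in C^2$ on a neighbourhood of the compact set $\overline{\mathrm{conv}\,\Omega}$, extending it if necessary, since~\eqref{thirdeq} involves $A$ only through $A|_\Omega$ on the right and, as the argument below shows, the limit on the left does not depend on the extension.

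For $u\in C^\infty(\R^N)$ the core is a first-order expansion of the magnetic difference quotient. Taylor-expanding about $y$ together with $e^{\i z}=1+\i z+O(z^2)$, and using $A(\tfrac{x+y}{2})=A(y)+O(|x-y|)$ --- uniformly for $x,y$ in a fixed compact set, by the $C^2$ bounds on $u$ and $A$ over $\overline{\mathrm{conv}\,\Omega}$, which also keeps the segment $[y,x]$ inside that set --- one obtains
\[
u(x)-e^{\i(x-y)\cdot A(\frac{x+y}{2})}u(y)=(x-y)\cdot\Psi(y)+R(x,y),\qquad \Psi:=\nabla u-\i Au,\quad |R(x,y)|\le C|x-y|^2;
\]
this is, essentially, the differential estimate underlying Lemma~\ref{stima1}. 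Squaring,
\[
\Big|u(x)-e^{\i(x-y)\cdot A(\frac{x+y}{2})}u(y)\Big|^2=|(x-y)\cdot\Psi(y)|^2+O(|x-y|^3),
\]
the $O(|x-y|^3)$ absorbing the cross term $2\,\mathrm{Re}\big((x-y)\cdot\Psi(y)\,\overline{R}\big)$ and $|R|^2$.

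Inserting this into $Q_n(u)$, the remainder contributes, after dividing by $|x-y|^2$, a quantity bounded by $C\int_\Omega\int_\Omega|x-y|\,\rho_n(x-y)\,dx\,dy\le C|\Omega|\,\mathcal{H}^{N-1}(\S^{N-1})\int_0^{\mathrm{diam}\,\Omega}r\,\rho_n(r)r^{N-1}\,dr$; splitting this integral at an arbitrary $\delta>0$ and using $\int_0^\delta r\,\rho_n(r)r^{N-1}\,dr\le\delta\int_0^\infty\rho_n(r)r^{N-1}\,dr$ together with~\eqref{fourtheq}, it tends to $0$ --- here the boundedness of $\Omega$ is used. For the leading term, fix $y\in\Omega$ and set $d:=\mathrm{dist}(y,\partial\Omega)>0$; passing to $h=x-y$ and polar coordinates $h=r\omega$, and using that $|h|<d$ forces $y+h\in\Omega$,
\[
\int_\Omega\frac{|(x-y)\cdot\Psi(y)|^2}{|x-y|^2}\rho_n(x-y)\,dx=\Big(\int_0^d\rho_n(r)r^{N-1}\,dr\Big)\int_{\S^{N-1}}|\omega\cdot\Psi(y)|^2\,d\mathcal{H}^{N-1}(\omega)+o(1),
\]
the $o(1)$ being the contribution of $\{h:y+h\in\Omega,\ |h|\ge d\}$, bounded by $|\Psi(y)|^2\,\mathcal{H}^{N-1}(\S^{N-1})\int_d^\infty\rho_n(r)r^{N-1}\,dr\to 0$ by~\eqref{fourtheq}. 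Since $\int_0^d\rho_n(r)r^{N-1}\,dr\to 1$ by~\eqref{normaliz}--\eqref{fourtheq}, and $\int_{\S^{N-1}}|\omega\cdot v|^2\,d\mathcal{H}^{N-1}(\omega)=|v|^2\int_{\S^{N-1}}|\omega\cdot\mathbf{e}|^2\,d\mathcal{H}^{N-1}(\omega)=2K_N|v|^2$ for every $v\in\C^N$ (split $v$ into real and imaginary parts and use rotational invariance of $\mathcal{H}^{N-1}$ on $\S^{N-1}$), this integral converges to $2K_N|\Psi(y)|^2$ for every $y\in\Omega$; it is dominated by the constant $\mathcal{H}^{N-1}(\S^{N-1})\big(\sup_n\int_0^\infty\rho_n(r)r^{N-1}\,dr\big)\|\Psi\|_{L^\infty(\Omega)}^2\in L^1(\Omega)$, so dominated convergence yields $2K_N\int_\Omega|\Psi|^2\,dx=2K_N\int_\Omega|\nabla u-\i Au|^2\,dx$. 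Combined with the vanishing of the remainder, this proves~\eqref{thirdeq} for $u\in C^\infty(\R^N)$, and the density step of the first paragraph finishes.

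I expect the main obstacle to be the uniform control of the remainder $R(x,y)$: it is what forces us to work with genuinely smooth functions rather than directly in $H^1_A(\Omega)$, and it needs a little care when $\Omega$ is non-convex, since the segment $[y,x]$ appearing in the Taylor expansion may leave $\Omega$ --- circumvented by taking the approximants globally smooth on $\R^N$ and $A$ extended to a $C^2$ field near $\overline{\mathrm{conv}\,\Omega}$. Everything else is the standard interplay of polar coordinates, the elementary spherical integral producing $K_N$, and the concentration hypotheses~\eqref{normaliz}--\eqref{fourtheq}; on convex $\Omega$ (and in particular on balls) the segment issue disappears entirely.
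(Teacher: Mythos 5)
Your proof follows essentially the same approach as the paper: density reduction to smooth $u$ via Lemma~\ref{firstLemma}, a first-order Taylor expansion of $u(x)-e^{\i(x-y)\cdot A(\frac{x+y}{2})}u(y)$ with quadratic remainder, a near/far-diagonal split governed by $\mathrm{dist}(y,\partial\Omega)$, polar coordinates producing $2K_N$, and dominated convergence combined with the concentration hypotheses on $\rho_n$. The only cosmetic differences are that you expand the exponential and $u$ directly rather than Taylor-expanding the paper's auxiliary function $\varphi(y)=e^{\i(x-y)\cdot A(\frac{x+y}{2})}u(y)$, and that you explicitly address the possibility that the segment $[y,x]$ exits a non-convex $\Omega$ by working with globally smooth approximants and an extension of $A$, a point the paper leaves implicit.
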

\begin{proof}
Let us first observe that by \eqref{normaliz} and \eqref{fourtheq} we easily obtain that, for every $\delta>0$,
\begin{equation}
\lim_{n\to\infty}\int_{0}^{\delta}\rho_{n}(r)r^{N}dr=
\lim_{n\to\infty}\int_{0}^{\delta}\rho_{n}(r)r^{N+1}dr=0.
\label{fourtheq-2}
\end{equation}
In fact, taken any $0<\tau<\delta$, we have
$$
\int_{0}^{\delta}\rho_{n}(r)r^{N}dr=\int_{0}^{\tau}\rho_{n}(r)r^{N}dr+\int_{\tau}^{\delta}\rho_{n}(r)r^{N}dr\leq
\tau\int_{0}^{\tau}\rho_{n}(r)r^{N-1}dr+\delta\int_{\tau}^{\infty}\rho_{n}(r)r^{N-1}dr,
$$
from which formula \eqref{fourtheq-2} follows using \eqref{normaliz}, \eqref{fourtheq} and letting $\tau\searrow 0$.
We follow the main lines of the proof in \cite{bourg}. Setting
\[
F_{n}^{u}(x,y):=\frac{u(x)-e^{\i (x-y)\cdot A\left(\frac{x+y}{2}\right)}u(y)}{|x-y|}\rho_{n}^{1/2}(x-y),
\quad\,\,\, x,y\in\Omega,\,\, n\in\N,
\]
by virtue of Lemma \ref{firstLemma}, for all $u,v\in H^{1}_{A}(\Omega)$, recalling \eqref{normaliz} we have
\[
\big|\|F_{n}^{u}\|_{L^2(\Omega\times\Omega)}-\|F_{n}^{v}\|_{L^2(\Omega\times\Omega)}\big|\leq\|F_{n}^{u}-F_{n}^{v}\|_{L^2(\Omega\times\Omega)}
\leq C  \|u-v\|_{H^{1}_{A}(\Omega)},
\]
for some $C>0$ depending on $\Omega$ and $A$.
This allows to prove \eqref{thirdeq} for $u\in C^{2}(\bar{\Omega})$. If we set
\[
\varphi(y):=e^{\i (x-y)\cdot A\left(\frac{x+y}{2}\right)}u(y), 
\]
since
$$
\nabla_{y}\varphi(y)=e^{\i (x-y)\cdot A\left(\frac{x+y}{2}\right)}\Big(\nabla_{y}u(y)-\i A\Big(\frac{x+y}{2}\Big) u(y)+\frac{\i}{2}\, u(y) (x-y)\cdot
\nabla_y A \Big(\frac{x+y}{2}\Big)\Big),
$$
if $x\in\Omega,$ a second order Taylor expansion gives (since $u,A\in C^2$, then $\nabla^2_y \varphi$ is bounded on $\bar\Omega$)
$$
u(x)-e^{\i (x-y)\cdot A\left(\frac{x+y}{2}\right)}u(y)=
\varphi(x)-\varphi(y)=(\nabla u(x)-\i A(x) u(x))\cdot (x-y)+ \O(|x-y|^2).
$$
Hence, for any fixed $x\in\Omega$,
\begin{equation}
\frac{\big|u(x)-e^{\i (x-y)\cdot A\left(\frac{x+y}{2}\right)}u(y)\big|}{|x-y|}=\left|(\nabla u(x)-\i A(x) u(x))\cdot \frac{x-y}{|x-y|}\right|+\O(|x-y|).
\label{fiftheq}
\end{equation}
Fix $x\in\Omega$. If we set $R_x:=\text{dist}(x,\partial\Omega)$, integrating with respect to $y$, we have
\begin{align}
\int_{\Omega}\frac{|u(x)-e^{\i (x-y)\cdot A\left(\frac{x+y}{2}\right)}u(y)|^2}{|x-y|^{2}}\rho_{n}(x-y)\,dy
&=\int_{B(x,R_x)}\frac{|u(x)-e^{\i (x-y)\cdot A\left(\frac{x+y}{2}\right)}u(y)|^2}{|x-y|^{2}}\rho_{n}(x-y)\,dy\nonumber\\&
+\int_{\Omega\setminus B(x,R_x)}\frac{|u(x)-e^{\i (x-y)\cdot A\left(\frac{x+y}{2}\right)}u(y)|^2}{|x-y|^{2}}\rho_{n}(x-y)\,dy.
\label{sixtheq}
\end{align}
The second integral goes to zero by conditions \eqref{fourtheq}, since
$$
\lim_{n\to\infty}\int_{\Omega\setminus B(x,R_x)}\frac{|u(x)-e^{\i (x-y)\cdot A\left(\frac{x+y}{2}\right)}u(y)|^2}{|x-y|^{2}}\rho_{n}(x-y)\,dy\leq
C\lim_{n\to\infty}\int_{B^c(0,R_x)}\rho_n(z)dz=0.
$$
Now, in light of \eqref{fiftheq}, following \cite{bourg} we compute
\begin{align*}
\int_{B(x,R_x)}\frac{|u(x)-e^{\i (x-y)\cdot A\left(\frac{x+y}{2}\right)}u(y)|^2}{|x-y|^{2}}\rho_{n}(x-y)\,dy&
= Q_{N} |\nabla u(x)-\i A(x) u(x)|^{2}\int_{0}^{R_x}r^{N-1}\rho_{n}(r)dr\\&+\O\left(\int_{0}^{R_x} r^{N}\rho_{n}(r)dr\right)
+\O\left(\int_{0}^{R_x} r^{N+1}\rho_{n}(r)dr\right),
\end{align*}
where we have set
\[
Q_{N}=\int_{\S^{N-1}}|\omega\cdot {\bf e}|^{2}d\mathcal{H}^{N-1}(\omega),
\]
being ${\bf e}\in \R^N$ a unit vector.\ Letting $n\to\infty$ in \eqref{sixtheq}, 
the result follows by dominated convergence, taking into account
formulas \eqref{fourtheq-2}.
\end{proof}

\section{Proofs of Theorem~\ref{main} and \ref{main2}}

\subsection{Proof of Theorem~\ref{main}.}
If $r_\Omega:={\rm diam}(\Omega)$, we consider 
a radial cut-off $\psi\in C^\infty_c(\R^N)$, $\psi(x)=\psi_0(|x|)$ with $\psi_0(t)=1$ for $t<r_\Omega$ 
and $\psi_0(t)=0$ for $t>2r_\Omega$. Then, by construction, $\psi_0(|x-y|)=1,$ for every $x,y\in\Omega$.
Furthermore, let $\{s_n\}_{n\in\N}\subset (0,1)$ be a sequence with $s_n\nearrow 1$ as $n\to\infty$ and consider the sequence of 
radial functions in $L^1(\R^N)$
\begin{equation}
\label{def-rho}
\rho_n(|x|)=\frac{2(1-s_n)}{|x|^{N+2s_n-2}}\psi_0(|x|),
\,\,\quad x\in\R^N,\,\, n\in\N.
\end{equation}
Notice that \eqref{normaliz} holds, since
\begin{equation*}
\lim_{n\to\infty}\int_{0}^{r_\Omega}\rho_n(r)r^{N-1}dr
=\lim_{n\to\infty}2(1-s_n)\int_{0}^{r_\Omega}\frac{1}{r^{2s_n-1}}dr=\lim_{n\to\infty} r_\Omega^{2-2s_n}=1,
\end{equation*}
and
$$
\lim_{n\to\infty}\int_{r_\Omega}^{2 r_\Omega}\rho_n(r) r^{N-1}dr=\lim_{n\to\infty}2(1-s_n)\int_{r_\Omega}^{2r_\Omega}\frac{\psi_0(r)}{t^{2s_n-1}}dr=0.
$$
In a similar fashion, for any $\delta>0$, there holds
\begin{equation*}
\lim_{n\to\infty}\int_{\delta}^{\infty}\rho_{n}(r)r^{N-1}dr\leq \lim_{n\to\infty} 2(1-s_n)\int_{\delta}^{2r_\Omega}\frac{1}{t^{2s_n-1}}dt=0.
\end{equation*}
Then Theorem~\ref{main} follows directly from Theorem~\ref{general} using $\rho_n$ as defined in \eqref{def-rho}.  \qed

\subsection{Proof of Theorem~\ref{main2}.} In light of Theorem~\ref{main} and since $u=0$ on $\Omega^c$, we have
$$
\lim_{s\nearrow 1}(1-s)\int_{\R^{2N}}\frac{|u(x)-e^{\i (x-y)\cdot A\left(\frac{x+y}{2}\right)}u(y)|^2}{|x-y|^{N+2s}}dxdy=
	K_N\int_{\Omega}|\nabla u-\i A(x)u|^2dx+\lim_{s\nearrow 1}R_s,
$$	
where 
$$
R_s\leq 2(1-s)\int_{\Omega}\int_{\R^{N}\setminus\Omega}\frac{|u(x)|^2}{|x-y|^{N+2s}}dxdy.
$$
On the other hand, arguing as in the proof of \cite[Proposition 2.8]{braparsqu}, we get $R_s\to 0$
as $s\nearrow 1$ when $u\in C^\infty_c(\Omega)$ and, on account of Lemma~\ref{lemmino}, 
for general function in $H^1_{0,A}(\Omega)$ by a density argument.  \qed

\vskip12pt
\noindent
{\bf Acknowledgments.}\
The authors kindly thank the anonymous Referee for his/her very careful reading of the manuscript
and for providing useful remarks helpful to improve the content. \\
The authors are members of Gruppo Nazionale per l'Analisi Matematica, la Probabilit\`a
e le loro Applicazioni (GNAMPA).

\bigskip

\medskip


\begin{thebibliography}{99}
	
	
	\bibitem{arioliSz}
	G.\ Arioli, A.\ Szulkin,
	{\it A semilinear Schr\"odinger equation in the presence of a magnetic field}, Arch. Ration. Mech. Anal. {\bf 170} (2003), 277--295.
	
	\bibitem{AHS}
	J. Avron, I. Herbst, B. Simon, {\it Schr\"odinger operators with magnetic fields. I. General interactions}, Duke Math. J. 45 (1978), 847--883.
	
	\bibitem{bourg}
	J. Bourgain, H. Brezis, P. Mironescu, 
	{\it Another look at Sobolev spaces},
	in \emph{Optimal Control and Partial Differential Equations. A Volume in Honor of Professor Alain Bensoussan's 60th Birthday}
	(eds. J. L. Menaldi, E. Rofman and A. Sulem), IOS Press, Amsterdam, 2001, 439--455.
	
	\bibitem{bourg2}
	J. Bourgain, H. Brezis, P. Mironescu,
	{\it Limiting embedding theorems for $W^{s,p}$ when $s \uparrow 1$ and applications},
	{J. Anal. Math.} \textbf{87} (2002), 77--101.

\bibitem{braparsqu}
L.\ Brasco, E.\ Parini, M.\ Squassina,  
{\it Stability of variational eigenvalues for the fractional $p$-Laplacian},
Discrete Contin. Dyn. Syst. A {\bf 36} (2016), 1813--1845.

\bibitem{piemar}
P.\ d'Avenia, M.\ Squassina,  
{\it Ground states for fractional magnetic operators}, \url{http://arxiv.org/abs/1601.04230} 
	
	
%
\bibitem{evans}
L.C.\ Evans, Partial Differential Equations: Second Edition
Graduate Series in Mathematics, AMS, 2010.

	\bibitem{franketal}
	R.L.\ Frank, E.H.\ Lieb, R.\ Seiringer, 
	{\it Hardy-Lieb-Thirring inequalities for fractional Schr\"odinger operators}, 
	J. Amer. Math. Soc. {\bf 21} (2008), 925--950.
	
	
	\bibitem{I10}
	T.\ Ichinose, {\it Magnetic relativistic Schr\"odinger operators and imaginary-time path integrals}, Mathematical physics, spectral theory and stochastic analysis, 247--297, Oper.\ Theory Adv.\ Appl. {\bf 232}, Birkh\"auser/Springer, Basel, 2013.

\bibitem{mazia}
V. Maz'ya and T. Shaposhnikova,
 {\it On the Bourgain, Brezis, and Mironescu theorem concerning limiting embeddings of fractional Sobolev spaces},
  J. Funct. Anal. \textbf{195} (2002), 230--238.

\bibitem{reed}
M.\ Reed, B.\ Simon, Methods of modern mathematical physics, I, Functional analysis, Academic Press, Inc.,
New York, 1980
	
\end{thebibliography}
\end{document}